\newtheorem{thm}{Theorem}[section]
\theoremstyle{definition}
\numberwithin{equation}{section}
\begin{document}

\title[$q$-series and tails of colored Jones polynomials]{$q$-series and tails of colored Jones polynomials}

\author{Paul Beirne and Robert Osburn}

\address{72 Oaktree Road, Merville, Stillorgan, County Dublin}

\address{School of Mathematics and Statistics, University College Dublin, Belfield, Dublin 4, Ireland}

\email{paul.beirne@ucdconnect.ie}

\email{robert.osburn@ucd.ie}

\subjclass[2010]{Primary: 33D15; Secondary: 05A30, 57M25}
\keywords{$q$-series identities, colored Jones polynomial, tails}

\date{\today}

\begin{abstract}
We extend the table of Garoufalidis, L{\^e} and Zagier concerning conjectural Rogers-Ramanujan type identities for tails of colored Jones polynomials to all alternating knots up to 10 crossings. We then prove these new identities using $q$-series techniques.
\end{abstract}

\maketitle

\section{Introduction}

The colored Jones polynomial $J_{N}(K; q)$ for a knot $K$ is an important quantum invariant of knots. Here, we use the normalization $J_{N}(K; q)=1$ for the unknot $K$, $J_{1}(K; q)=1$ for all knots $K$ and $J_{2}(K; q)$ is the Jones polynomial of $K$. The {\it tail} of $J_{N}(K; q)$ is a power series whose first $N$ coefficients agree (up to a common sign) with the first $N$ coefficients for $J_{N}(K; q)$ for all $N \geq 1$. If $K$ is an alternating knot, then the tail exists and equals an explicit $q$-multisum $\Phi_{K}(q)$ (see \cite{a}, \cite{dl}, \cite{gl}). 

Recently, Garoufalidis and L{\^e} (with Zagier) presented a table (see Table 6 in \cite{gl}) of 43 conjectural Rogers-Ramanujan type identities between the tails $\Phi_{K}(q)$ and products of theta functions and/or false theta functions. This table consisted of the following knots $K$: all alternating knots up to $8_4$, the twist knots $K_{p}$, $p>0$ or $p<0$, the torus knots $T(2, p)$, $p>0$, each of their mirror knots $-K$ and $-8_5$. For example, if we define for a positive integer $b$

\begin{equation*}
h_{b}= h_{b}(q) = \sum_{n \in \mathbb{Z}} \epsilon_{b}(n) q^{\frac{bn(n+1)}{2} - n}
\end{equation*}

\noindent where

\begin{equation*}
 \epsilon_{b}(n) = \left\{
  \begin{array}{ll}
    (-1)^n & \text{if $b$ is odd,}\\
    1 & \text{if $b$ is even and $n \geq 0$,} \\
    -1 & \text{if $b$ is even and $n < 0$} 
  \end{array} \right. 
\end{equation*}

\noindent and 

\begin{equation*}
(a)_n = (a;q)_n = \prod_{k=1}^{n} (1-aq^{k-1}),
\end{equation*}

\noindent valid for $n \in \mathbb{N} \cup \{\infty\}$, then

\begin{equation} \label{72}
\begin{aligned}
\Phi_{7_2}(q) & = (q)^{7}_{\infty} \sum_{a,b,c,d,e,f,g \geq 0}\frac{q^{3a^{2} + 2a + b^2 + bg + ac + ad + ae + af + ag + cd + de + ef + fg + c + d + e+ f + g}}{(q)_{a}(q)_{b}(q)_{c}(q)_{d}(q)_{e}(q)_{f}(q)_{g}(q)_{b+g}(q)_{a+c}(q)_{a+d}(q)_{a+e}(q)_{a+f}(q)_{a+g}} \\
& \stackrel{?}{=} h_{6}.
\end{aligned}
\end{equation}

\noindent Note that $h_1=0$, $h_2=1$ and $h_3 = (q)_{\infty}$. In general, $h_{b}$ is a theta function if $b$ is odd and a false theta function if $b$ is even. Using $q$-series techniques, Keilthy and the second author \cite{ko} proved not only (\ref{72}), but all of the remaining conjectural identities in \cite{gl}. 

The purpose of this paper is to extend the table of Garoufalidis, L{\^e} and Zagier to include all alternating knots up to 10 crossings. This is done in Tables \ref{89} and \ref{10} below. One immediately observes that their table is not ``complete" in the sense that there exist knots $K$ such that $\Phi_{K}(q) \neq \Phi_{K'}(q)$ for any knot $K'$ in Table 6 of \cite{gl}. For example, $\Phi_{8_7}(q) = h_3 h_5$. Our main result is the following.

\begin{thm} \label{main}
The identities in Tables \ref{89} and \ref{10} are true.
\end{thm}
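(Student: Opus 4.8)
The plan is to prove each identity in Tables~\ref{89} and \ref{10} individually, reducing every conjectural equality $\Phi_K(q) \stackrel{?}{=} F(q)$ (where $F$ is a product of (false) theta functions such as $h_a h_b$, or a single $h_b$) to a verified $q$-series identity. The starting point is that, for each alternating knot $K$ up to $10$ crossings, the tail $\Phi_K(q)$ is given by an explicit $q$-multisum obtained from a reduced alternating diagram via the formula of \cite{a}, \cite{dl}, \cite{gl}; so the first step is simply to record these multisums (this is the table-building that the paper advertises). The real content is then: show each such multisum equals the claimed product. I expect the proof to follow the template of Keilthy and the second author \cite{ko}, i.e. manipulate the multisum by a sequence of classical $q$-hypergeometric tools.

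The key steps, carried out identity by identity, are as follows. First I would collapse inner summation variables one at a time using the $q$-binomial theorem and the $q$-Chu--Vandermonde / Durfee-square type identities, together with Heine-type transformations of ${}_2\phi_1$ series, to peel the multisum down to a single- or double-sum that is recognizable. Second, I would invoke standard evaluations — the Jacobi triple product, Rogers--Fine, and especially the known Andrews--Gordon / Rogers--Ramanujan identities, which is exactly where factors like $(q)_\infty^{k}$ out front get absorbed and where theta functions $h_b$ with $b$ odd appear. Third, for the many new cases whose right-hand side is a \emph{product} $h_a h_b$ (e.g.\ $\Phi_{8_7}(q)=h_3h_5$, $\Phi_{8_9}$, and the bulk of Table~\ref{10}), I would exploit the fact that such knots are built from simpler tangles, so their tail multisum factors (or nearly factors) as a product of two multisums each of which has already been identified with a single $h_b$ in \cite{gl}, \cite{ko} or earlier in this paper; the remaining work is a finite check that the "interaction" terms in the exponent contribute trivially after a change of variables. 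Fourth, the handful of genuinely irreducible new single-$h_b$ cases would be handled by a direct Bailey-pair / Bailey-chain argument producing the needed false theta function, mimicking the false theta evaluations in \cite{ko}.

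The main obstacle I anticipate is the combinatorial bookkeeping in the high-crossing multisums: a reduced alternating $10$-crossing diagram can force a multisum in seven, eight, or more variables with a dense quadratic form in the exponent and many shifted Pochhammer symbols $(q)_{a+c}$ in the denominator, and it is not a priori clear which summation order makes the ${}_2\phi_1$ collapses available. Part of the difficulty is genuinely in \emph{finding} the right sequence of substitutions $a \mapsto a-c$, etc., that decouples the form; once found, each individual collapse is routine, but there is no uniform recipe, so each knot needs its own massaging. A secondary difficulty is that the product cases require knowing the tail factorizes, which in turn relies on the topological input (the knot being a connected sum or a simple cabling/twisting of smaller pieces) matching the algebraic structure of the multisum — so some care is needed to make that correspondence precise rather than merely heuristic.

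The plan therefore is: (1) tabulate the multisums; (2) for each, apply $q$-binomial / $q$-Gauss / Heine reductions in a carefully chosen order; (3) recognize the resulting small sum via Jacobi triple product, Rogers--Fine, Andrews--Gordon, or a Bailey-pair evaluation; (4) for product right-hand sides, reduce to already-known single-$h_b$ identities via a factorization of the multisum; and (5) collect the $(q)_\infty$ powers and signs to match $h_b$ exactly. I expect step (2), the search for the linearizing change of variables in the largest multisums, to be where essentially all the effort goes.
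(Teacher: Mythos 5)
Your overall template (write each tail as an explicit Nahm-type multisum, then peel off summation variables with $q$-series identities until a recognizable theta/false theta product emerges) is the right one, and is indeed what the paper does. But there are two concrete problems with the plan as written. First, you propose to attack every entry of Tables \ref{89} and \ref{10} ``identity by identity,'' which is both infeasible in practice and misses the structural step that makes the theorem tractable: by Theorem 2 of \cite{ad}, the tail $\Phi_K(q)$ depends only on the reduced Tait graph $\mathcal{T}_K'$, so all but $16$ of the entries follow by matching reduced Tait graphs with knots already treated in \cite{ko} and \cite{gl}. The actual proof only has to verify new multisum identities for $8_7$, $8_{13}$, $-9_5$, $9_{14}$, $-9_{17}$, $-9_{20}$, $-9_{27}$, $9_{31}$, $10_{5}$, $-10_{8}$, $10_{10}$, $10_{15}$, $10_{19}$, $10_{26}$, $10_{28}$ and $10_{44}$. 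Without this reduction you are committing to well over a hundred independent eight-to-ten-variable computations, and your plan gives no mechanism for avoiding that.

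Second, your proposed treatment of the product cases $\Phi_K(q)=h_a h_b$ --- arguing that the knot ``is built from simpler tangles'' so the multisum factors, with the topological input being a connected sum or cabling --- would fail: essentially all of these knots (e.g.\ $8_7$ with tail $h_3h_5$, $10_{15}$ with tail $h_5^2$) are prime, and no such topological factorization exists. In the paper the product structure emerges purely algebraically: the workhorse identity (2.6) of \cite{ko} (equation (\ref{key}) here) converts an alternating single-variable sum into an auxiliary multisum whose new variables decouple from the rest after applications of (\ref{e1}) and (\ref{andy}); the decoupled block is then evaluated by (\ref{double}) or (\ref{triple}) to produce one factor $h_4$ or $h_5$, and the surviving sum is matched against one of the seven already-proven evaluations (\ref{51})--(\ref{-84}) or (\ref{72}). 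So the ``factorization'' is an artifact of the quadratic form in the exponent, not of the knot's topology, and your step (4) needs to be replaced by this kind of variable-splitting argument. Your suggested toolkit (Bailey pairs, Rogers--Fine, Andrews--Gordon) is also heavier than what is actually needed; the proof runs entirely on Euler's two summations, the Durfee-square identity, and the three auxiliary identities (\ref{double}), (\ref{triple}), (\ref{key}) imported from \cite{ko}.
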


\begin{table}[h]
\begin{tabular}{|c|c|c||c|c|c||c|c|c|}
\hline
$K$ & $\Phi_{K}(q)$ & $\Phi_{-K}(q)$ & $K$ & $\Phi_{K}(q)$ & $\Phi_{-K}(q)$ & $K$ & $\Phi_{K}(q)$ & $\Phi_{-K}(q)$\\ \hline
 $8_6$ &  $h_3 h_4$  &  $h_5$ & $9_6$ &  $h_3 h_6$  &  $h_4$ & $9_{24}$ & $?$ & $?$ \\
 $8_7$ &  $h_3 h_5$  &  $h_3^2$ & $9_7$ &  $h_3^{} h_4$  &  $h_6$ & $9_{25}$ & $h_3^3$ & $?$ \\
 $8_8$ &  $h_3 h_5$  &  $h_3^2$ & $9_8$ &  $h_3 h_6$  &  $h_3^2$ & $9_{26}$ & $h_3^2 h_4$ & $h_3^3$\\
 $8_9$ &  $h_3 h_4$  &  $h_3 h_4$ & $9_9$ &  $h_4 h_5$  &  $h_4$ & $9_{27}$ & $h_3^3$ & $h_3^2 h_4$\\
 $8_{10}$ &  ?  &  $h_3^2$ & $9_{10}$ &  $h_4^2$  &  $h_5$ & $9_{28}$ & $?$ & $?$ \\
 $8_{11}$ &  $h_3 h_4$  &  $h_3 h_4$ &$9_{11}$ &  $h_4^{} h_5$ & $h_3^2$ & $9_{29}$ & $?$ & $?$ \\
 $8_{12}$ &  $h_3^{} h_4$  &  $h_3^{} h_4$ & $9_{12}$ &  $h_3 h_4$  &  $h_3 h_5$ & $9_{30}$ & $h_3^3$ & $?$ \\
 $8_{13}$ &  $h_3^2 h_4$  &  $h_3^2$ & $9_{13}$ &  $h_4^2$  &  $h_3^{} h_4$ & $9_{31}$ & $h_3^4$ & $h_3^3$\\
 $8_{14}$ &  $h_3 h_4$  &  $h_3^3$ & $9_{14}$ &  $h_3^2 h_5$  &  $h_3^2$ & $9_{32}$ & $?$ & ?\\
 $8_{15}$ &  $h_3^3$  &  $?$ & $9_{15}$ &  $h_3 h_4$  &  $h_3 h_5$ & $9_{33}$ & ? & ?\\
 $8_{16}$ &  $?$  &  $?$ & $9_{16}$ &  $h_4$  &  $?$ & $9_{34}$ & ? & ?\\
 $8_{17}$ &  $?$  &  $?$ & $9_{17}$ &  $h_3^2$  &  $h_3^2 h_5$ &  $9_{35}$ & ? & $h_3$\\
 $8_{18}$ &  $?$  &  $?$ & $9_{18}$ &  $h_3 h_4$  &  $h_4^2$ & $9_{36}$ & ? & $h_3^2$\\
 $9_1$ &  $h_9$  &  $1$ & $9_{19}$ &  $h_3 h_5$  &  $h_3^3$ & $9_{37}$ & $h_3^3$ & ?\\
$9_2$ &  $h_8$  &  $h_3$ & $9_{20}$ &  $h_3^2$ &  $h_3 h_4^2$ & $9_{38}$ & ? & ?\\
$9_3$ &  $h_7$  &  $h_4$ & $9_{21}$ &  $h_3 h_4$  &  $h_3^2 h_4$ & $9_{39}$ & ? & ?\\
$9_4$ &  $h_6$  &  $h_5$ & $9_{22}$ & $?$ & $h_3^2$ & $9_{40}$ & ? & ?\\
$9_5$ &  $h_3$  &  $h_4 h_6$ & $9_{23}$ & $h_4^2$ & $h_3^3$ & $9_{41}$ & ? & ?\\
\hline
\end{tabular}
\caption{\label{89}}
\end{table}

\begin{table}[h]
\begin{tabular}{|c|c|c||c|c|c||c|c|c|}
\hline
$K$ & $\Phi_{K}(q)$ & $\Phi_{-K}(q)$ & $K$ & $\Phi_{K}(q)$ & $\Phi_{-K}(q)$ & $K$ & $\Phi_{K}(q)$ & $\Phi_{-K}(q)$\\ \hline
$10_1$ &  $h_9$  &  $h_3$ & $10_{27}$ & $h_3 h_5$ & $h_{3}^2 h_4$ & $10_{53}$ & ? & $h_3^3$  \\
$10_2$ &  $?$  &  $h_3$ & $10_{28}$ & $h_3 h_4 h_5$ & $h_3^{2}$ & $10_{54}$ & ? & $h_3^2$  \\
$10_3$ &  $h_7$  &  $h_5$ & $10_{29}$ & $h_3 h_4^{2}$ & $h_3 h_4$ & $10_{55}$ & ? & $h_3^3$  \\
$10_4$ &  $?$  &  $h_3$ & $10_{30}$ &  $h_3 h_4^{2}$ & $h_3^3$ & $10_{56}$ & ? & $h_3 h_4$ \\
$10_5$ &  $h_3 h_7$  &  $h_3^2$ & $10_{31}$ & $h_3 h_5$ & $h_3^2 h_4$ & $10_{57}$ & ? & $h_3^2 h_4$ \\
$10_6$ &  $h_3 h_6$  &  $h_5$  & $10_{32}$ & $?$ & $h_3^3$ & $10_{58}$ & ? & $h_3^3$ \\
$10_7$ &  $h_3^{} h_6$  &  $h_3^{} h_4$ & $10_{33}$ & $?$ & $h_3^{2} h_4$ & $10_{59}$ & ? & $h_3^3$ \\
$10_8$ &  $h_3^{}$  &  $h_5 h_6$ & $10_{34}$ & $h_3 h_7$ & $h_3^2$ & $10_{60}$ & ? & $h_3^3$ \\
$10_9$ &  $h_3 h_6$  &  $h_3^{} h_4$ & $10_{35}$ & $h_3 h_6$ & $h_3 h_4$ & $10_{61}$ & ? & $h_3$ \\
$10_{10}$ &  $h_3^2 h_6$  &  $h_3^2$ & $10_{36}$ & $h_3 h_6$ & $h_3^3$ & $10_{62}$ & ? & $h_3^2$ \\
$10_{11}$ &  $h_4 h_5$  &  $h_5$ & $10_{37}$ & $h_3 h_5$ & $h_3 h_5$ & $10_{63}$ & ? & $h_3 h_4$ \\ 
$10_{12}$ & $h_3 h_5$ &  $h_3 h_5$ & $10_{38}$ & $?$ & $h_3^3$ & $10_{64}$ & ? & $h_3 h_4$ \\
$10_{13}$ &  $h_4 h_5$  &  $h_3 h_4$ & $10_{39}$ & $h_3 h_4$ & $h_3^2 h_5$ & $10_{65}$ & ? & $h_3^2 h_4$ \\ 
$10_{14}$ & $h_3^2 h_5$ & $h_3 h_4$ & $10_{40}$ & $?$ & $h_3^2 h_4$ & $10_{66}$ & ? & $?$ \\
$10_{15}$ & $h_5^2$ & $h_3^2$ & $10_{41}$ & $h_3 h_4^2$ & $h_3^3$ & $10_{67}$ & ? & $h_3^3$ \\
$10_{16}$ & $h_4 h_5$ & $h_3 h_4$ & $10_{42}$ & $h_3^2 h_4$ & $?$ & $10_{68}$ & ? & $h_3^2$ \\
$10_{17}$ & $?$ & $h_3 h_5$ & $10_{43}$ & $h_3^2 h_4$ & $h_3^2 h_4$ & $10_{69}$ & ? & $?$ \\
$10_{18}$ & $h_3^2 h_5$ & $h_3 h_4$ & $10_{44}$ & $h_3^3 h_4$ & $h_3^4$ & $10_{70}$ & $?$ & $h_3 h_4$ \\
$10_{19}$ & $h_3 h_4 h_5$ & $h_3^2$ & $10_{45}$ & $h_3^4$ & $h_3^4$ & $10_{71}$ & ? & $h_3^2 h_4$ \\
$10_{20}$ & $h_7$ & $h_3 h_4$ & $10_{46}$ & ? & $h_3$& $10_{72}$ & $h_3 h_4$ & ? \\
$10_{21}$ & $h_3 h_6$ & $h_3 h_4$ & $10_{47}$ & ? & $h_3^2$ & $10_{73}$ & ? & $h_3^2 h_4$ \\
$10_{22}$ & $h_3 h_4$ & $h_4 h_5$ & $10_{48}$ & ? & $h_3 h_5$ & $10_{74}$ & ? & $h_3 h_4$ \\
$10_{23}$ & $h_3 h_5$ & $h_3^2 h_4$ & $10_{49}$ & ? &$h_3^2 h_5$  & $10_{75}$ & $?$ & ? \\
$10_{24}$ & $h_4 h_5$ & $h_3 h_4$ & $10_{50}$ & ? & $h_3 h_4$ & $10_{76}$ & ? & $h_5$ \\
$10_{25}$ & $h_3 h_4^2$ & $h_3 h_4$ & $10_{51}$ & ? & $h_3^2 h_4$ & $10_{77}$ & ? & $h_3 h_5$ \\
$10_{26}$ &$h_3 h_4^2$  &$h_3 h_4$ & $10_{52}$ &? & $h_3^3$  & $10_{78}$ & ? & ? \\
\hline
\end{tabular}
\caption{\label{10}}
\end{table}

Unfortunately, we were unable to find similar identities not only in each case labelled ``$?$" in Tables \ref{89} and \ref{10}, but for any alternating knot (or its mirror) from $10_{79}$ to $10_{123}$. This is also the situation for $8_5$ where although one has (after $q$-theoretic simplification or the methods in \cite{hbubble})

\begin{equation} \label{85}
\Phi_{8_5}(q) = (q)_{\infty}^2 \sum_{a,b \geq 0} \frac{q^{a^2 + a + b^2 + b} (q)_{a+b}}{(q)_a^2 (q)_b^2}, 
\end{equation}

\noindent the modular (or false theta, mock/mixed mock, quantum modular) properties of the double sum in (\ref{85}) are not clear. The difficulty in finding nice identities for these tails is due to the structure of their reduced Tait graphs (see \cite{gv}). Another approach to Theorem \ref{main} is to utilize the skein-theoretic techniques in \cite{ad}, \cite{eh} and \cite{h}. It would be of considerable interest to investigate the connection between skein theory and $q$-series to gain a better understanding of these unknown cases and of a general framework.

It would also be desirable to study $q$-series identities in other settings which arise from knot theory. For example, the $q$-multisum $\Phi_{K}(q)$ occurs as the ``$0$-limit" of $J_{N}(K; q)$ (see Theorem 2 in \cite{gl}). Garoufalidis and L{\^e} have also obtained an explicit formula (see Theorem 3 in \cite{gl}) for the ``1-limit" of $J_{N}(K; q)$. Finally, do tails exist (in some appropriate sense) for generalizations of $J_{N}(K; q)$ (see \cite{gnsss}, \cite{no}--\cite{veen})?

The paper is organized as follows. In Section 2, we recall the necessary background from \cite{ko}. In Section 3, we prove Theorem \ref{main}. 

\section{Preliminaries}

We first recall six $q$-series identities (see (2.1)--(2.3), Lemma 2.1, (4.3) and the proof of (4.1) in \cite{ko}). Namely, 

\begin{equation} \label{e1}
\sum_{n=0}^{\infty} \frac{t^n}{(q)_{n}} = \frac{1}{(t)_{\infty}},
\end{equation}

\begin{equation} \label{e2}
\sum_{n=0}^{\infty} \frac{(-1)^n t^n q^{n(n-1)/2}}{(q)_{n}} = (t)_{\infty},
\end{equation}

\begin{equation} \label{andy} 
\sum_{n=0}^{\infty} \frac{q^{n^2 + An}}{(q)_{n} (q)_{n+A}} = \frac{1}{(q)_{\infty}}
\end{equation}

\noindent for any integer $A$,

\begin{equation} \label{double}
\sum_{m,n \geq 0} (-1)^{n} \frac{q^{m^2 + m + mn + \frac{n(n+1)}{2}}}{(q)_m (q)_n} = h_4,
\end{equation}

\begin{equation} \label{triple}
\sum_{l, m, n \geq 0} (-1)^{l+n} \frac{q^{\frac{3l(l+1)}{2} + m^2 + m + \frac{n(n+1)}{2} + 2lm + ln + mn}}{(q)_l (q)_m (q)_n} = h_5
\end{equation}

\noindent and

\begin{equation} \label{key}
\sum_{a \geq 0} (-1)^{na}  \frac{q^{\frac{na(a+1)}{2} -a + a  \sum\limits_{k=1}^{n-1} c_{k}}}{(q)_{a}\prod\limits_{k=1}^{n-1}(q)_{a+c_{k}}} = \frac{1}{(q)_{\infty}} \sum_{i_{1}, \dotsc, i_{n-2} \geq 0} (-1)^{\sum\limits_{k=1}^{n-2}\sum\limits_{j=1}^{k}i_{j}} \frac{q^{\frac{1}{2}\sum\limits_{k=1}^{n-2}\bigl(\sum\limits_{j=1}^{k}i_{j}\bigr)\bigl(1+\sum\limits_{j=1}^{k}i_{j} \bigr)+ \sum\limits_{k=2}^{n-1}\sum\limits_{j=1}^{k-1}c_{k}i_{j}}}{\prod\limits_{k=1}^{n-2}(q)_{i_{k}}\prod\limits_{k=1}^{n-2} (q)_{c_{k} +\sum\limits_{j=1}^{k}i_{j} }}
\end{equation}

\noindent for any $n>2$ and integers $c_k$.

Let $K$ be an alternating knot with $c$ crossings and $\mathcal{T}_{K}$ its associated Tait graph. The reduced Tait graph $\mathcal{T}_{K}^{\prime}$ is obtained from $\mathcal{T}_{K}$ by replacing every set of two edges that connect the same two vertices by a single edge. The tail $\Phi_{K}(q)$ is given by

\begin{equation}
\Phi_{K}(q) = (q)_{\infty}^{c} S_{K}(q)
\end{equation}

\noindent where $S_{K}(q)$ is an explicitly constructed $q$-multisum (see pages 261--264 in \cite{ko}). Now, by Theorem 2 in \cite{ad}, if $\mathcal{T}_{K}^{\prime}$ is the same as $\mathcal{T}_{L}^{\prime}$ for two alternating knots $K$ and $L$, then $\Phi_{K}(q) = \Phi_{L}(q)$. Thus, by comparing the reduced Tait graphs for those knots in Table 1 of \cite{ko} and Tables \ref{89} and \ref{10} above, it suffices to verify the conjectural identities in the following cases: $8_7$, $8_{13}$, $-9_5$, $9_{14}$, $-9_{17}$, $-9_{20}$, $-9_{27}$, $9_{31}$, $10_{5}$, $-10_{8}$, $10_{10}$, $10_{15}$, $10_{19}$, $10_{26}$, $10_{28}$, $10_{44}$. Note that Corollary 2 in \cite{gl} is false as stated since $\mathcal{T}_{8_6}^{\prime} \cong \mathcal{T}_{9_{24}}^{\prime}$, but $\Phi_{8_6}(q) \neq \Phi_{9_{24}}(q)$. 

The strategy for proving Theorem \ref{main} is now as follows. For each of the 16 cases, we first compute $S_{K}(q)$ using the methods from \cite{ko}. We then employ (\ref{e1})--(\ref{key}) to reduce this $q$-multisum to (\ref{72}) or one of the following key identities proven in \cite{ko}:

\begin{equation} \label{51}
S_{5_1}(q) :=\sum_{a,b,c,d,e \geq 0} (-1)^{a}\frac{q^{\frac{a(5a+3)}{2} + ab + ac + ad + ae + bc + cd + de + b + c + d + e}}{(q)_{a}(q)_{b}(q)_{c}(q)_{d}(q)_{e}(q)_{a+b}(q)_{a+c}(q)_{a+d}(q)_{a+e}} = \frac{1}{(q)^5_{\infty}} h_5,
\end{equation}

\begin{equation} \label{62}
S_{6_2}(q) := \sum_{a,b,c,d,e,f \geq 0}(-1)^{e} \frac{q^{2f^{2} + f + \frac{e(3e+1)}{2} + ab + af + bc + bf + cd + ce + cf + de + a + b + c + d}}{(q)_{a}(q)_{b}(q)_{c}(q)_{d}(q)_{e}(q)_{f}(q)_{a+f}(q)_{b+f}(q)_{c+e}(q)_{c+f}(q)_{d+e}} = \frac{1}{(q)^5_{\infty}} h_4,
\end{equation}

\begin{equation} \label{71}
\begin{aligned}
S_{7_1}(q) &:=\sum_{a,b,c,d,e,f,g \geq 0} (-1)^{a}\frac{q^{\frac{a(7a+5)}{2} + ab + ac + ad + ae + af + ag + bc + cd + de + ef + fg + b + c + d + e+ f + g}}{(q)_{a}(q)_{b}(q)_{c}(q)_{d}(q)_{e}(q)_{f}(q)_{g}(q)_{a+b}(q)_{a+c}(q)_{a+d}(q)_{a+e}(q)_{a+f}(q)_{a+g}} \\
& = \frac{1}{(q)^7_{\infty}} h_7,
\end{aligned}
\end{equation}

\begin{equation} \label{74} 
\begin{aligned}
S_{7_4}(q) & := \sum_{a,b,c,d,e,f,g \geq 0} \frac{q^{2f^{2} + f +2g^{2} + g  + ab + ag + bc + bg + cd + cf + cg + de +df + ef + a + b + c + d + e}}{(q)_{a}(q)_{b}(q)_{c}(q)_{d}(q)_{e}(q)_{f}(q)_{g}(q)_{a+g}(q)_{b+g}(q)_{c+f}(q)_{c+g}(q)_{d+f}(q)_{e+f}} \\
& = \frac{1}{(q)^7_{\infty}} h_4^2,
\end{aligned}
\end{equation}

\begin{equation} \label{77}
\begin{aligned}
& S_{7_7}(q) := \sum_{a,b,c,d,e,f,g \geq 0} (-1)^{e+f+g}\frac{q^{\frac{3e^2}{2} + \frac{e}{2} +\frac{3f^2}{2} + \frac{f}{2} +\frac{3g^2}{2} + \frac{g}{2} + ab + ad + ae + af + bf + cd + cg + de + dg + a + b + c}}{(q)_{a}(q)_{b}(q)_{c}(q)_{d}(q)_{e}(q)_{f}(q)_{g}(q)_{a+e}(q)_{d+e}(q)_{a+f}(q)_{b+f}(q)_{c+g}} \\
& \times \frac{q^{d}}{(q)_{d+g}} \\
& =  \frac{1}{(q)_{\infty}^4},
\end{aligned}
\end{equation}

\begin{equation} \label{82}
\begin{aligned}
& S_{8_2}(q) :=\sum_{a,b,c,d,e,f,g,h \geq 0}(-1)^{b}\frac{q^{3a^{2} + 2a + \frac{b(3b+1)}{2} + ad + ae + af + ag + ah + bc + bd + cd + de + ef + fg + gh + c + d + e+ f}}{(q)_{a}(q)_{b}(q)_{c}(q)_{d}(q)_{e}(q)_{f}(q)_{g}(q)_{h}(q)_{b+c}(q)_{b+d}(q)_{a+d}(q)_{a+e}(q)_{a+f}} \\
& \times \frac{q^{g+h}}{(q)_{a+g} (q)_{a+h}} \\
& = \frac{1}{(q)^7_{\infty}} h_6
\end{aligned}
\end{equation}

\noindent and

\begin{equation} \label{-84}
\begin{aligned}
& S_{-8_4}(q) := \sum_{a,b,c,d,e,f,g,h \geq 0}(-1)^{g}\frac{q^{\frac{g(5g+3)}{2} + h(2h+1) + ab + ah + bc + bh + cd + cg + ch + de + dg + ef + eg + fg + a + b + c + d}}{(q)_{a}(q)_{b}(q)_{c}(q)_{d}(q)_{e}(q)_{f}(q)_{g}(q)_{h}(q)_{a+h}(q)_{b+h}(q)_{c+g}(q)_{c+h}(q)_{d+g}} \\
& \times \frac{q^{e+f}}{(q)_{e+g} (q)_{f+g}} \\
& = \frac{1}{(q)^{8}_{\infty}} h_4 h_5. 
\end{aligned}
\end{equation}

\section{Proof of Theorem \ref{main}}

\begin{proof}[Proof of Theorem \ref{main}]
We give full details for $8_7$, $-9_5$ and $-10_8$. As the remaining cases are handled similarly, we sketch their proofs. For $\Phi_{8_7}(q)$, it suffices to prove 

\begin{equation} \label{87}
\begin{aligned}
& S_{8_7}(q) := \sum_{a,b,c,d,e,g,h,i \geq 0} (-1)^{h+i} \frac{q^{\frac{i(5i+3)}{2} + \frac{h(3h+1)}{2} + g^2 + ab + ag + ah + bc + bh + bi + cd + ci + de + di + ei + a + b + c}}{(q)_a (q)_b (q)_c (q)_d (q)_e (q)_g (q)_h (q)_i (q)_{a+g} (q)_{a+h} (q)_{b+h} (q)_{b+i} (q)_{c+i}} \\
& \times \frac{q^{d+e}}{(q)_{d+i} (q)_{e+i}} \\
& = \frac{1}{(q)_{\infty}^{7}} h_5.
\end{aligned}
\end{equation}

\noindent We now have

\begin{equation*}
\begin{aligned}
& S_{8_7}(q) = \frac{1}{(q)_{\infty}} \sum_{a,b,c,d,e,h,i \geq 0}  (-1)^{h+i} \frac{q^{\frac{i(5i+3)}{2} + \frac{h(3h+1)}{2} + ab + ah + bc + bh + bi + cd + ci + de + di + ei + a + b + c}}{(q)_a (q)_b (q)_c (q)_d (q)_e (q)_h (q)_i (q)_{a+h} (q)_{b+h} (q)_{b+i} (q)_{c+i} (q)_{d+i}} \\
& \times \frac{q^{d+e}}{(q)_{e+i}} \\
& (\text{evaluate the $g$-sum with (\ref{andy})}) \\
& = \frac{1}{(q)_{\infty}^2} \sum_{a,b,c,d,e,h,i \geq 0}  (-1)^{h+i} \frac{q^{\frac{i(5i+3)}{2} + \frac{h(h+1)}{2} + ab + ah + bc + bi + cd + ci + de + di + ei + a + b + c + d + e}}{(q)_a (q)_b (q)_c (q)_d (q)_e (q)_h (q)_i (q)_{b+h} (q)_{b+i} (q)_{c+i} (q)_{d+i} (q)_{e+i}} \\
& (\text{apply (\ref{key}) to the $h$-sum with $n=3$}) \\
& = \frac{1}{(q)_{\infty}^2} \sum_{b,c,d,e,i \geq 0} (-1)^i \frac{q^{\frac{i(5i+3)}{2} + bc + bi + cd + ci + de + di + ei + b + c + d + e}}{(q)_{b} (q)_c (q)_d (q)_e (q)_i (q)_{b+i} (q)_{c+i} (q)_{d+i} (q)_{e+i}} \\
& (\text{evaluate the $a$-sum with (\ref{e1}), simplify, then use (\ref{e2}) for the $h$-sum}).
\end{aligned}
\end{equation*}

\noindent Thus, (\ref{87}) then follows from (\ref{51}) after letting $i \to a$.

For $\Phi_{8_{13}}(q)$, it suffices to prove

\begin{equation} \label{813}
\begin{aligned}
S_{8_{13}}(q) & := \sum_{a,c,d,e,f,g,h,i \geq 0} (-1)^{g+h} \frac{q^{\frac{g(3g+1)}{2} + \frac{(3h+1)}{2} + i(2i+1) + af + ag + ci + cd + de + di + ef + eh + ei}}{(q)_a (q)_c (q)_d (q)_e (q)_f (q)_g (q)_h (q)_i (q)_{a+g} (q)_{c+i} (q)_{d+i} (q)_{e+i} (q)_{e+h}} \\
& \times \frac{q^{fh + fg + a + c + d + e + f}}{(q)_{f+h} (q)_{f+g}} \\
& = \frac{1}{(q)_{\infty}^6} h_4.
\end{aligned}
\end{equation}

\noindent Apply (\ref{key}) with $n=3$ to the $g$-sum, (\ref{e1}) to the $a$-sum, then simplify and (\ref{e2}) to the $g$-sum to obtain

\begin{equation*}
S_{8_{13}}(q) = \frac{1}{(q)_{\infty}} \sum_{c,d,e,f,h,i \geq 0} (-1)^h \frac{q^{\frac{h(3h+1)}{2} + i(2i+1) +  ci + cd + de + di + ef + eh + ei + fh + c + d + e + f}}{(q)_{c} (q)_d (q)_e (q)_f (q)_h (q)_i (q)_{c+i} (q)_{d+i} (q)_{e+i} (q)_{e+h} (q)_{f+h}}.
\end{equation*}

\noindent Thus, (\ref{813}) then follows from (\ref{62}) upon $(c,d,e,f,h,i) \to (a,b,c,d,e,f)$. 

For $\Phi_{-9_5}(q)$, it suffices to prove

\begin{equation} \label{-95}
\begin{aligned}
& S_{-9_5}(q) := \sum_{a,b,c,d,e,f,g,h,j \geq 0} \frac{q^{h(2h+1) + j(3j+2) + ab + ag + ah + aj + bc + bh + ch + de + dj + ef + ej + fg + fj + gj + a + b + c}}{(q)_a (q)_b (q)_c (q)_d (q)_e (q)_f (q)_g (q)_h (q)_j (q)_{a+h} (q)_{a+j} (q)_{b+h} (q)_{c+h} (q)_{d+j}} \\
& \times \frac{q^{d+e+f+g}}{(q)_{e+j} (q)_{f+j} (q)_{g+j}} \\
& = \frac{1}{(q)_{\infty}^9} h_4 h_6.
\end{aligned}
\end{equation}

\noindent We now have

\begin{equation*}
\begin{aligned}
& S_{-9_5}(q) = \frac{1}{(q)_{\infty}} \sum_{a,b,c,d,e,f,g,j,s,t \geq 0} \frac{q^{s^2 + s + st + \frac{t(t+1)}{2} + bs + c(s+t) + j(3j+2) + ab + ag + aj + bc + de + dj + ef + ej + fg}}{(q)_a (q)_b (q)_c (q)_d (q)_e (q)_f (q)_g (q)_j (q)_s (q)_t (q)_{a+j} (q)_{d+j} (q)_{s+a}} \\
& \times \frac{q^{fj + gj + a + b + c + d + e + f + g}}{(q)_{s+t+b} (q)_{e+j} (q)_{f+j} (q)_{g+j}} \\
&( \text{apply (\ref{key}) to the $h$-sum with $n=4$}) \\
& = \frac{1}{(q)_{\infty}^2} \sum_{a,b,d,e,f,g,j,s,t \geq 0} \frac{q^{s^2 + s + st + \frac{t(t+1)}{2} + bs + j(3j+2) + ab + ag + aj + de + dj + ef + ej + fg + fj + gj + a + b + d + e}}{(q)_a (q)_b (q)_d (q)_e (q)_f (q)_g (q)_j (q)_s (q)_t (q)_{a+j} (q)_{d+j} (q)_{e+j} (q)_{f+j} (q)_{g+j}} \\
& \times \frac{q^{f+g}}{(q)_{s+a}} \\
& (\text{evaluate the $c$-sum with (\ref{e1}) and simplify}) \\
\end{aligned}
\end{equation*}

\begin{equation*}
\begin{aligned}
& = \frac{1}{(q)_{\infty}^3} h_4 \sum_{a,d,e,g,j \geq 0} \frac{q^{j(3j+2) + ag + aj + de + dj + ef + ej + fg + fj + gj + a + d + e + f + g}}{(q)_a (q)_d (q)_e (q)_f (q)_f (q)_g (q)_j (q)_{a+j} (q)_{d+j} (q)_{e+j} (q)_{f+j} (q)_{g+j}} \\
& (\text{evaluate the $b$-sum with (\ref{e1}), simplify, then apply (\ref{double}) to the $st$-sum}).
\end{aligned}
\end{equation*}

\noindent Now, (\ref{-95}) follows from first applying (\ref{andy}) the $b$-sum in (\ref{72}), then letting $(a,d,e,f,g,j) \to (c,g,f,e,d,a)$. 

For $\Phi_{9_{14}}(q)$, it suffices to prove

\begin{equation} \label{914}
\begin{aligned}
S_{9_{14}}(q) & := \sum_{a,b,c,d,e,g,h,i,j \geq 0} (-1)^{h+i+j} \frac{q^{\frac{h(3h+1)}{2} + \frac{i(3i+1)}{2} + \frac{j(5j+3)}{2} + ab + ag + ah + ai + bc + bi + bj + cd + cj + de + dj + ej}}{(q)_{a} (q)_b (q)_c (q)_d (q)_e (q)_g (q)_h (q)_i (q)_j (q)_{a+h} (q)_{a+i} (q)_{b+i} (q)_{b+j}} \\
& \times \frac{q^{gh + a + b + c + d + e +g}}{(q)_{c+j} (q)_{d+j} (q)_{e+j} (q)_{g+h}} \\
& = \frac{1}{(q)_{\infty}^7} h_5.
\end{aligned}
\end{equation}

\noindent First, apply (\ref{key}) with $n=3$ to the $h$-sum, (\ref{e1}) to the $g$-sum, simplify and (\ref{e2}) to the $h$-sum, then (\ref{key}) with $n=3$ to the $i$-sum, (\ref{e1}) to the $a$-sum, simplify and (\ref{e2}) to the $i$-sum to obtain

\begin{equation*}
S_{9_{14}}(q) = \frac{1}{(q)_{\infty}^2} \sum_{b,c,d,e,j} (-1)^{j} \frac{q^{\frac{j(5j+3)}{2} + bc + bj + cd + cj + de + dj + ej + b + c + d + e}}{(q)_b (q)_c (q)_d (q)_e (q)_j (q)_{b+j} (q)_{c+j} (q)_{d+j} (q)_{e+j}}.
\end{equation*}

\noindent Thus, (\ref{914}) follows from (\ref{51}) after $j \to a$.

For $\Phi_{-9_{17}}(q)$, it suffices to prove

\begin{equation} \label{-917}
\begin{aligned}
& S_{-9_{17}}(q) := \sum_{a,b,c,d,e,f,h,i,j \geq 0} (-1)^{h+i+j} \frac{q^{\frac{h(3h+1)}{2} + \frac{i(5i+3)}{2} + \frac{j(3j+1)}{2} + ab + aj + bc + bi + bj + cd + ci + de + di + ef}}{(q)_a (q)_b (q)_c (q)_d (q)_e (q)_f (q)_h (q)_i (q)_j (q)_{a+j} (q)_{b+i} (q)_{b+j} (q)_{c+i}} \\
& \times \frac{q^{eh + ei + fh + a + b + c + d + e + f}}{(q)_{d+i} (q)_{e+h} (q)_{e+i} (q)_{f+h}} \\
& = \frac{1}{(q)_{\infty}^7} h_5.
\end{aligned}
\end{equation}

\noindent First, apply (\ref{key}) with $n=3$ to the $h$-sum, (\ref{e1}) to the $f$-sum, simplify and (\ref{andy}) to the $h$-sum, then (\ref{key}) with $n=3$ to the $j$-sum, (\ref{e1}) to the $a$-sum, simplify and (\ref{andy}) to the $j$-sum to get

\begin{equation*}
S_{-9_{17}}(q) = \frac{1}{(q)_{\infty}^2} \sum_{b, c, d, e, i \geq 0} (-1)^i \frac{q^{\frac{i(5i+3)}{2} + bc + bi + cd + ci + de + di + ei + b + c + d + e}}{(q)_b (q)_c (q)_d (q)_e (q)_i (q)_{b+i} (q)_{c+i} (q)_{d+i} (q)_{e+i}}.
\end{equation*}

\noindent Thus, (\ref{-917}) follows from (\ref{51}) after $i \to a$.

For $\Phi_{-9_{20}}(q)$, it suffices to prove

\begin{equation} \label{-920}
\begin{aligned}
S_{-9_{20}}(q) & := \sum_{a,b,c,d,e,f,h,i,j \geq 0} (-1)^h \frac{q^{\frac{h(3h+1)}{2} + i(2i+1) + j(2j+1) + ab + ah + bc + bh + bi + cd + ci + de + di + dj + ef + ej}}{(q)_a (q)_b (q)_c (q)_d (q)_e (q)_f (q)_h (q)_i (q)_j (q)_{a+h} (q)_{b+h} (q)_{b+i} (q)_{c+i}} \\
& \times \frac{q^{fj + a + b + c + d + e + f}}{(q)_{d+i} (q)_{d+j} (q)_{e+j} (q)_{f+j}} \\
& = \frac{1}{(q)_{\infty}^8} h_4^2.
\end{aligned}
\end{equation}

\noindent Apply (\ref{key}) with $n=3$ to the $h$-sum, (\ref{e1}) to the $a$-sum and simplify, then (\ref{e2}) to the $h$-sum to obtain

\begin{equation*}
S_{-9_{20}}(q) = \frac{1}{(q)_{\infty}} \sum_{b,c,d,e,f,i,j \geq 0} \frac{q^{i(2i+1) + j(2j+1) + bc + bi + cd + ci + de + di + dj + ef + ej + fj + b + c + d + e + f}}{(q)_b (q)_c (q)_d (q)_e (q)_f (q)_i (q)_j (q)_{b+i} (q)_{c+i} (q)_{d+i} (q)_{d+j} (q)_{e+j} (q)_{f+j}}.
\end{equation*}

\noindent Now, (\ref{-920}) follows from (\ref{74}) after the substitution $(b,c,d,e,f,i,j) \to (a,b,c,d,e,g,f)$. 

For $\Phi_{-9_{27}}(q)$, it suffices to prove

\begin{equation} \label{-927}
\begin{aligned}
S_{-9_{27}}(q) & := \sum_{a,b,c,d,e,f,g,h,i \geq 0} (-1)^{f+h} \frac{q^{\frac{f(3f+1)}{2} + g(2g+1) + \frac{h(3h+1)}{2} + i^2 + ab + af + bc + bf + bg + cd + cg + de + dg + dh}}{(q)_a (q)_b (q)_c (q)_d (q)_e (q)_f (q)_g (q)_h (q)_i (q)_{a+f} (q)_{b+f} (q)_{b+g} (q)_{c+g}} \\
& \times \frac{q^{eh + ei + a + b + c + d + e}}{(q)_{d+g} (q)_{d+h} (q)_{e+h} (q)_{e+i}} \\
& = \frac{1}{(q)_{\infty}^7} h_4.
\end{aligned}
\end{equation}

\noindent Apply (\ref{andy}) to the $i$-sum, (\ref{key}) with $n=3$ to the $f$-sum, (\ref{e1}) to the $a$-sum, simplify and (\ref{e2}) to the $f$-sum to obtain

\begin{equation*}
S_{-9_{27}} = \frac{1}{(q)_{\infty}^2} \sum_{b,c,d,e,g,h \geq 0} (-1)^h \frac{q^{g(2g+1) + \frac{h(3h+1)}{2} + bc + bg + cd + cg + de + dg + dh + eh + b + c + d + e}}{(q)_b (q)_c (q)_d (q)_e (q)_g (q)_h (q)_{b+g} (q)_{c+g} (q)_{d+g} (q)_{d+h} (q)_{e+h}}.
\end{equation*}

\noindent Now, (\ref{-927}) follows from (\ref{62}) after letting $(b,c,d,e,g,h) \to (a,b,c,d,f,e)$. 

For $\Phi_{9_{31}}(q)$, it suffices to prove

\begin{equation} \label{931}
\begin{aligned}
& S_{9_{31}}(q) := \sum_{a,b,c,e,f,g,h,i,j \geq 0} (-1)^{g+h+i+j} \frac{q^{\frac{g(3g+1)}{2} + \frac{h(3h+1)}{2} + \frac{i(3i+1)}{2} + \frac{j(3j+1)}{2} + ab + af + ag + aj + bc + bg + bh}}{(q)_{a} (q)_b (q)_c (q)_e (q)_f (q)_g (q)_h (q)_i (q)_j (q)_{a+g} (q)_{a+j} (q)_{b+g}} \\
& \times \frac{q^{ch + ef + ei + fi + fj + a + b + c + e + f}}{(q)_{b+h} (q)_{c+h} (q)_{e+i} (q)_{f+i} (q)_{f+j}} \\
& = \frac{1}{(q)_{\infty}^5}.
\end{aligned}
\end{equation}

\noindent Apply (\ref{key}) with $n=3$ to the $h$-sum, (\ref{e1}) to the $c$-sum, simplify and (\ref{e2}) to the $h$-sum to obtain

\begin{equation*}
\begin{aligned}
& S_{9_{31}}(q)  = \frac{1}{(q)_{\infty}} \sum_{a,b,e,f,g,i,j \geq 0} (-1)^{g+i+j} \frac{q^{\frac{g(3g+1)}{2} + \frac{i(3i+1)}{2} + \frac{j(3j+1)}{2} + ab + af + ag + aj + bg + ef + ei + fi + fj + a}}{(q)_a (q)_b (q)_e (q)_f (q)_g (q)_i (q)_j (q)_{a+g} (q)_{a+j} (q)_{b+g} (q)_{e+i} (q)_{f+i}} \\
& \times \frac{q^{b+e+f}}{(q)_{f+j}}.
\end{aligned}
\end{equation*}

\noindent Now, (\ref{931}) follows from (\ref{77}) after letting $(a,b,e,f,g,i,j) \to (a,b,c,d,f,g,e)$. 

For $\Phi_{10_5}(q)$, it suffices to prove 

\begin{equation} \label{105}
\begin{aligned}
& S_{10_5}(q) := \sum_{a,b,c,d,e,f,g,i,j,k \geq 0} (-1)^{j+k} \frac{q^{\frac{j(3j+1)}{2} + \frac{k(7k+5)}{2} + i^2 + ab + ai + aj + bc + bj + bk + cd + ck + de + dk + ef + ek + fg}}{(q)_a (q)_b (q)_c (q)_d (q)_e (q)_f (q)_g (q)_i (q)_j (q)_k (q)_{a+i} (q)_{a+j} (q)_{b+j}} \\
& \times \frac{q^{fk + gk + a + b + c + d + e + f + g}}{(q)_{b+k} (q)_{c+k} (q)_{d+k} (q)_{e+k} (q)_{f+k} (q)_{g+k}} \\
& = \frac{1}{(q)_{\infty}^9} h_7.
\end{aligned}
\end{equation}

\noindent Apply (\ref{andy}) to the $i$-sum, (\ref{key}) with $n=3$ to the $j$-sum, (\ref{e1}) to the $a$-sum and simplify, then (\ref{e2}) to the $j$-sum to obtain

\begin{equation*}
\begin{aligned}
& S_{10_5}(q) = \frac{1}{(q)_{\infty}^2} \sum_{b,c,d,e,f,g,k \geq 0} (-1)^k \frac{q^{\frac{k(7k+5)}{2} + bc + bk + cd + ck + de + dk + ef + ek + fg + fk + gk + b + c + d + e + f}}{(q)_b (q)_c (q)_d (q)_e (q)_f (q)_g (q)_k (q)_{b+k} (q)_{c+k} (q)_{d+k} (q)_{e+k} (q)_{f+k}} \\
& \times \frac{q^{g}}{(q)_{g+k}}.
\end{aligned}
\end{equation*}

\noindent Now, (\ref{105}) follows from (\ref{71}) after letting $k \to a$. 

For $\Phi_{-10_8}(q)$, it suffices to prove

\begin{equation} \label{-108}
\begin{aligned}
& S_{-10_8}(q) := \sum_{a,b,c,d,e,f,g,h,i,k \geq 0} (-1)^{i} \frac{q^{\frac{i(5i+3)}{2} + k(3k+2) + ab + ae + ai + ak + bc + bi + cd + ci + di + ef + ek + fg + fk + gh}}{(q)_a (q)_b (q)_c (q)_d (q)_e (q)_f (q)_g (q)_h (q)_i (q)_k (q)_{a+i} (q)_{a+k} (q)_{b+i}} \\
& \times \frac{q^{gk + hk + a+b+c+d+e+f+g+h}}{(q)_{c+i} (q)_{d+i} (q)_{e+k} (q)_{f+k} (q)_{g+k} (q)_{h+k}} \\
& = \frac{1}{(q)_{\infty}^{10}} h_5 h_6.
\end{aligned}
\end{equation}

\noindent We now have

\begin{equation*}
\begin{aligned}
& S_{-10_8}(q) = \frac{1}{(q)_{\infty}} \sum_{a,b,c,d,e,f,g,h,i,k, j, l \geq 0} (-1)^{i+l} \frac{q^{\frac{3i(i+1)}{2} + j^2 + j + \frac{l(l+1)}{2} + 2ij + il + jl + k(3k+2) + ab + ae  + ak + bc + bi}}{(q)_a (q)_b (q)_c (q)_d (q)_e (q)_f (q)_g (q)_h (q)_i (q)_j (q)_k (q)_l} \\
& \times \frac{q^{cd + c(i+j) + d(i+j+l) + ef + ek + fg + fk + gh + gk + hk + a+b+c+d+e+f+g+h}}{(q)_{a+i} (q)_{a+k} (q)_{e+k} (q)_{f+k} (q)_{g+k} (q)_{h+k} (q)_{b+i+j} (q)_{c+i+j+l}} \\
& (\text{apply (\ref{key}) to the $i$-sum with $n=5$}) \\
& =  \frac{1}{(q)_{\infty}^4} \sum_{a,e,f,g,h,i,k,j,l \geq 0} (-1)^{i+l} \frac{q^{\frac{3i(i+1)}{2} + j^2 + j + \frac{l(l+1)}{2} + 2ij + il + jl + k(3k+2) + ae  + ak + ef + ek + fg + fk + gh + hk}}{(q)_a (q)_e (q)_f (q)_g (q)_h (q)_i (q)_j (q)_k (q)_l (q)_{a+k} (q)_{e+k} (q)_{f+k}} \\
& \times \frac{q^{a+e+f+g+h}}{(q)_{g+k} (q)_{h+k}} \\
& (\text{evaluate the $d$-sum, $c$-sum and $b$-sum with (\ref{e1}) and simplify}) \\
&  = \frac{1}{(q)_{\infty}^4} h_5 \sum_{a,e,f,g,h,k \geq 0} \frac{q^{k(3k+2) + ak + ek + fk + gk + hk + ae + ef + fg + gh + a + e + f + g + h}}{(q)_a (q)_e (q)_f (q)_g (q)_h (q)_k (q)_{a+k} (q)_{e+k} (q)_{f+k} (q)_{g+k} (q)_{h+k}} \\
& (\text{evaluate the $ijl$-sum using (\ref{triple})}).
\end{aligned}
\end{equation*}

\noindent Now, (\ref{-108}) follows from (\ref{72}) after applying $(a,e,f,g,h,k) \to (c,d,e,f,g,a)$. 

For $\Phi_{10_{10}}(q)$, it suffices to prove

\begin{equation} \label{1010}
\begin{aligned}
& S_{10_{10}}(q) := \sum_{a,c,d,e,f,g,h,i,j,k \geq 0} (-1)^{i+j} \frac{q^{\frac{i(3i+1)}{2} + \frac{j(3j+1)}{2} + k(3k+2) + ah + ai + cd + ck + de + dk + ef + ek + fg + fk + gh}}{(q)_a (q)_c (q)_d (q)_e (q)_f (q)_g (q)_h (q)_i (q)_j (q)_k (q)_{a+i} (q)_{c+k} (q)_{d+k}} \\
& \times \frac{q^{gj + gk + hi + hj + a+c+d+e+f+g+h}}{(q)_{e+k} (q)_{f+k} (q)_{g+k} (q)_{g+j} (q)_{h+j} (q)_{h+i}} \\
& = \frac{1}{(q)_{\infty}^8} h_6.
\end{aligned}
\end{equation}

\noindent Apply (\ref{key}) with $n=3$ to the $i$-sum, (\ref{e1}) to the $a$-sum and simplify, (\ref{e2}) to the $i$ and simplify to obtain

\begin{equation*}
\begin{aligned}
& S_{10_{10}}(q) = \frac{1}{(q)_{\infty}} \sum_{c,d,e,f,g,h,j,k \geq 0} (-1)^j \frac{q^{\frac{j(3j+1)}{2} + k(3k+2) + cd + ck + de + dk + ef + ek + fg + fk + gh + gj + gk + hj + c}}{(q)_c (q)_d (q)_e (q)_f (q)_g (q)_h (q)_j (q)_k (q)_{c+k} (q)_{d+k} (q)_{e+k} (q)_{f+k}} \\
& \times \frac{q^{d + e+ f + g + h}}{(q)_{g+k} (q)_{g+j} (q)_{h+j}}.
\end{aligned}
\end{equation*}

\noindent Now, (\ref{1010}) follows from (\ref{82}) after letting $(c,d,e,f,g,h,j,k) \to (h,g,f,e,d,c,b,a)$.

For $\Phi_{10_{15}}(q)$, it suffices to prove

\begin{equation} \label{1015}
\begin{aligned}
& S_{10_{15}}(q)  := \sum_{a,b,c,d,e,g,h,i,j,k \geq 0} (-1)^{i+j} \frac{q^{\frac{i(5i+3)}{2} + \frac{j(5j+3)}{2} + k^2 + ab + ah + ai + bc + bi + bj + cd + cj + de + dj + ej + gh + gi}}{(q)_a (q)_b (q)_c (q)_d (q)_e (q)_g (q)_h (q)_i (q)_j (q)_k (q)_{a+i} (q)_{b+i} (q)_{b+j}} \\
& \times \frac{q^{gk + hi + a+b+c+d+e+g+h}}{(q)_{c+j} (q)_{d+j} (q)_{e+j} (q)_{g+i} (q)_{g+k} (q)_{h+i}} \\
& = \frac{1}{(q)_{\infty}^{10}} h_5^2.
\end{aligned}
\end{equation}

\noindent Apply (\ref{andy}) to the $k$-sum, (\ref{key}) with $n=5$ to the $j$-sum, (\ref{e1}) to the $e$-sum and simplify, to the $d$-sum and simplify and to the $c$-sum and simplify and (\ref{triple}) to obtain

\begin{equation*}
S_{10_{15}}(q) = \frac{1}{(q)_{\infty}^5} h_5 \sum_{a,b,g,h,i \geq 0} (-1)^i \frac{q^{\frac{i(5i+3)}{2} + ab + ah + ai + bi + gh + gi + hi + a + b + g + h}}{(q)_a (q)_b (q)_g (q)_h (q)_i (q)_{a+i} (q)_{b+i} (q)_{g+i} (q)_{h+i}}.
\end{equation*}

\noindent Now, (\ref{1015}) follows from (\ref{51}) after letting $(a,b,g,h,i) \to (c,b,e,d,a)$. 

For $\Phi_{10_{19}}(q)$, it suffices to prove

\begin{equation} \label{1019}
\begin{aligned}
& S_{10_{19}}(q) := \sum_{a,c,d,e,f,g,h,i,j,k \geq 0} (-1)^{j+k} \frac{q^{i(2i+1) + \frac{j(3j+1)}{2} + \frac{k(5k+3)}{2} + ah + ai + cd + ck + de + dek + ef + ek + fg + fk}}{(q)_a (q)_c (q)_d (q)_e (q)_f (q)_g (q)_h (q)_i (q)_j (q)_k (q)_{a+i} (q)_{c+k} (q)_{d+k}} \\
& \times \frac{q^{fj + gh + gi + gj + hi + a + c + d + e + f + g + h}}{(q)_{e+k} (q)_{f+k} (q)_{f+j} (q)_{g+j} (q)_{g+i} (q)_{h+i}} \\
& = \frac{1}{(q)_{\infty}^9} h_4 h_5.
\end{aligned}
\end{equation}

\noindent Apply (\ref{key}) with $n=5$ to the $k$-sum, (\ref{e1}) to the $c$-sum and simplify, to the $d$-sum and simplify and to the $e$-sum and simplify and (\ref{triple}) to obtain

\begin{equation*}
S_{10_{19}}(q) = \frac{1}{(q)_{\infty}^4} \sum_{a,f,g,h,i,j \geq 0} (-1)^j \frac{q^{i(2i+1) + \frac{j(3j+1)}{2} + ah + ai + fg + fj + gh + gi + gj + hi + a + f + g + h}}{(q)_a (q)_f (q)_g (q)_h (q)_i (q)_j (q)_{a+i} (q)_{f+j} (q)_{g+j} (q)_{g+i} (q)_{h+i}}.
\end{equation*}

\noindent Now, (\ref{1019}) follows from (\ref{62}) after letting $(a,f,g,h,i,j) \to (a,d,c,b,f,e)$. 

For $\Phi_{10_{26}}(q)$, it suffices to prove

\begin{equation} \label{1026}
\begin{aligned}
& S_{10_{26}}(q)  := \sum_{a,b,c,e,f,g,h,i,j,k \geq 0} (-1)^i \frac{q^{h(2h+1) + \frac{i(3i+1)}{2} + j^2 + k(2k+1) + ab + ag + ah + ai + bc + bh + ch + ef + ek + fg}}{(q)_a (q)_b (q)_c (q)_e (q)_f (q)_g (q)_h (q)_i (q)_j (q)_k (q)_{a+h} (q)_{a+i} (q)_{b+h}} \\
& \times \frac{q^{fk + gi + gj + gk + a + b + c + e + f + g}}{(q)_{c+h} (q)_{e+k} (q)_{f+k} (q)_{g+i} (q)_{g+j} (q)_{g+k}} \\
& = \frac{1}{(q)_{\infty}^9} h_4^2.
\end{aligned}
\end{equation}

\noindent Apply (\ref{andy}) to the $j$-sum, (\ref{key}) with $n=4$ to the $k$-sum, (\ref{e1}) to the $e$-sum and simplify and to the $f$-sum and simplify and (\ref{double}) to obtain

\begin{equation*}
S_{10_{26}}(q) = \frac{1}{(q)_{\infty}^4} h_4 \sum_{a,b,c,g,h,i \geq 0} (-1)^i \frac{q^{h(2h+1) + \frac{i(3i+1)}{2} + ab + ag + ah + ai + bc + bh + ch + gi + a + b + c + g}}{(q)_a (q)_b (q)_c (q)_g (q)_h (q)_i (q)_{a+h} (q)_{a+i} (q)_{b+h} (q)_{c+h} (q)_{g+i}}.
\end{equation*}

\noindent Now, (\ref{1026}) follows from (\ref{62}) after letting $(a,b,c,g,h,i) \to (c,b,a,d,f,e)$. 

For $\Phi_{10_{28}}(q)$, it suffices to prove

\begin{equation} \label{1028}
\begin{aligned}
& S_{10_{28}}(q) := \sum_{a,b,d,e,f,g,h,i,j,k \geq 0} (-1)^{i+j} \frac{q^{\frac{i(3i+1)}{2} + \frac{j(5j+3)}{2} + k(2k+1) + ab + ah + ai + aj + bi + de + dk + ef + ek + fg + fj}}{(q)_a (q)_b (q)_d (q)_e (q)_f (q)_g (q)_h (q)_i (q)_j (q)_k (q)_{a+i} (q)_{a+j} (q)_{b+i}} \\
& \times \frac{q^{fk + gh + gj + hj + a + b + d + e + f + g + h}}{(q)_{d+k} (q)_{e+k} (q)_{f+j} (q)_{f+k} (q)_{g+j} (q)_{h+j}} \\
& = \frac{1}{(q)_{\infty}^9} h_4 h_5.
\end{aligned}
\end{equation}

\noindent Apply (\ref{key}) with $n=3$ to the $i$-sum, (\ref{e1}) to the $b$-sum and simplify and (\ref{e2}) to the $i$-sum to obtain 

\begin{equation*}
\begin{aligned}
& S_{10_{28}}(q) = \frac{1}{(q)_{\infty}} \sum_{a,d,e,f,g,h,j,k \geq 0} (-1)^j \frac{q^{\frac{j(5j+3)}{2} + k(2k+1) + ah + aj + de + dk + ef + ek + fg + fj + fk + gh + gj + hj}}{(q)_a (q)_d (q)_e (q)_f (q)_g (q)_h (q)_j (q)_k (q)_{a+j} (q)_{d+k} (q)_{e+k} (q)_{f+j}} \\
& \times \frac{q^{a+d+e+f+g+h}}{(q)_{f+k} (q)_{g+j} (q)_{h+j}}.
\end{aligned}
\end{equation*}

\noindent Now, (\ref{1028}) follows from (\ref{-84}) after letting $(a,d,e,f,g,h,j,k) \to (f,a,b,c,d,e,g,h)$. 

For $\Phi_{10_{44}}(q)$, it suffices to prove

\begin{equation} \label{1044}
\begin{aligned}
&S_{10_{44}}(q) := \sum_{a,b,c,e,f,g,h,i,j,k \geq 0} (-1)^{h+j+k} \frac{q^{\frac{h(3h+1)}{2} + i(2i+1) + \frac{j(3j+1)}{2} + \frac{k(3k+1)}{2} + ab + ag + ai + aj + bc + bj + bk + ck}}{(q)_a (q)_b (q)_c (q)_e (q)_f (q)_g (q)_h (q)_i (q)_j (q)_k (q)_{a+i} (q)_{a+j} (q)_{b+j}} \\
& \times \frac{q^{ef + eh + fg + fh + fi + gi + a + b + c + e + f + g}}{(q)_{b+k} (q)_{c+k} (q)_{e+h} (q)_{f+h} (q)_{f+i} (q)_{g+i}} \\
& = \frac{1}{(q)_{\infty}^7} h_4. 
\end{aligned}
\end{equation}

\noindent Apply (\ref{key}) with $n=3$ to the $h$-sum, (\ref{e1}) to the $e$-sum and simplify, (\ref{e2}) to the $h$-sum, (\ref{key}) with $n=3$ to the $k$-sum, (\ref{e1}) to the $c$-sum and simplify and (\ref{e2}) to the $k$-sum to obtain

\begin{equation*}
S_{10_{44}}(q) = \frac{1}{(q)_{\infty}^2} \sum_{a,b,f,g,i,j \geq 0} (-1)^j \frac{q^{i(2i+1) + \frac{j(3j+1)}{2} + ab + ag + ai + aj + bj + fg + fi + gi + a + b + f + g}}{(q)_a (q)_b (q)_f (q)_g (q)_i (q)_j (q)_{a+i} (q)_{a+j} (q)_{b+j} (q)_{f+i} (q)_{g+i}}.
\end{equation*}

\noindent Now, (\ref{1044}) follows from (\ref{62}) after letting $(a,b,f,g,i,j) \to (c,d,a,b,f,e)$. 

\end{proof}

\section*{Acknowledgements}
The authors would like to thank Stavros Garoufalidis, Mustafa Hajij and Jeremy Lovejoy for their suggestions and helpful comments. The second author would like to thank Don Zagier for his question which motivated this paper. This question was posed on August 19, 2014 at the workshop ``Low-dimensional topology and number theory" in Oberwolfach. Finally, the second author thanks the organizers (in particular, Frits Beukers) of the conference ``Automatic sequences, Number Theory, Aperiodic Order", October 28--30, 2015 at TU Delft for the opportunity to discuss these results.

\end{document}